\title{\huge Optimal Multivalued Shattering}
\author{Zolt\'an F\"uredi\footnotemark[1]\footnotemark[2] \and Attila Sali\footnotemark[3]\footnotemark[4]}
\date{}
\newcommand{\keywords}[1]{\newline\textbf{Keywords:} #1}
\newtheorem{defn}{Definition}
\newtheorem{thm}[defn]{Theorem}
\newtheorem{prop}[defn]{Proposition}
\newenvironment{proof}{\medskip                    
\noindent{\scshape Proof:}}{\hfill $\Box$\medskip\par}  
\newcommand{\forb}{\mathsf{forb}}
\begin{document}
\maketitle
\begin{abstract}
We have found the most general extension of the celebrated
Sauer, Perles and Shelah, Vapnik and Chervonenkis result
 from 0-1 sequences to $k$-ary codes still giving a polynomial bound.

Let $\mathcal{C}\subseteq \{ 0,1,\dots, k-1 \}^n$ be a $k$-ary code of length $n$.
For a subset of coordinates $S\subset \{1,2,\ldots ,n\}$
  the projection of $\mathcal{C}$ to $S$ is denoted by $\mathcal{C}\vert_S$.
We say that $\mathcal{C}$ $(i,j)$-{\em shatters} $S$ if $\mathcal{C}\vert_S$ contains
 all the $2^{|S|}$ distinct vectors (codewords) with coordinates $i$ and $j$.
Suppose that $\mathcal{C}$ does not $(i,j)$-shatter any coordinate set of
size $s_{i,j}\geq 1$ for every $1\leq i< j\leq q$ and let $p=\sum (s_{i,j}-1)$.
Using a natural induction we prove that
 $$ |{\mathcal C}|\leq O(n^p)
   $$
 for any given $p$ as $n\to \infty$ and give a construction showing that
  this exponent is the best possible.

 Several open problems are mentioned.\\
    \\
\keywords{shattering, VC-dimension, forbidden configurations}
\end{abstract}

\section{Introduction}
Let $[n]$ denote the set $\{1,2,\ldots ,n\}$ while let $(k)$ denote $\{0,1,\ldots ,k-1\}$
 and for any set $S$, let $2^S$ denote the family of all $2^{|S|}$subsets of $S$ and let
${S\choose k}$  denote all ${|S|\choose k}$
subsets of $S$ of size $k$.
Consider a family
$\mathcal{F}$ of subsets of $[n]$. We say that $\mathcal{F}$ {\it shatters} $S$ if
\begin{equation*}\{E\cap S\,:\,E\in\mathcal{F}\}=2^{S}.
   \end{equation*}
The following  result has a variety of  applications
 including learning theory and applied probability.
\begin{thm}\label{thm:sauer}[{\rm Sauer{\cite{Sa}}, Perles, Shelah{\cite{Sh}},
Vapnik, Chervonenkis{\cite{VC}}}]\enskip
Let $\mathcal{F}$ be a family of subsets of $[n]$ with no shattered
set of size $s$. Then
\begin{equation}\label{eq:sauer}
  |\mathcal{F}|\le {n\choose s-1}+{n\choose s-2}+\cdots +{n\choose 0}
   \end{equation}
and 
 this bound is the best possible.\end{thm}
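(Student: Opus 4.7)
The plan is to prove (\ref{eq:sauer}) by induction on $n$, splitting on a single coordinate. The base case $n < s$ is trivial since then the right-hand side equals $2^n \ge |\mathcal{F}|$. For the inductive step, I fix the coordinate $n$ and write $\mathcal{F} = \mathcal{F}_0 \cup \mathcal{F}_1$ with $\mathcal{F}_0 = \{F \in \mathcal{F} : n \notin F\}$ and $\mathcal{F}_1 = \{F \in \mathcal{F} : n \in F\}$, then set $\mathcal{F}_1' = \{F \setminus \{n\} : F \in \mathcal{F}_1\}$. Define the two derived families on $[n-1]$:
\[
\mathcal{A} = \mathcal{F}_0 \cup \mathcal{F}_1', \qquad \mathcal{B} = \mathcal{F}_0 \cap \mathcal{F}_1',
\]
so that $|\mathcal{F}| = |\mathcal{F}_0| + |\mathcal{F}_1| = |\mathcal{A}| + |\mathcal{B}|$.

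The key step, and the only place the shattering hypothesis is used, is the twin claim that $\mathcal{A}$ shatters no $s$-subset of $[n-1]$ and $\mathcal{B}$ shatters no $(s{-}1)$-subset of $[n-1]$. The first holds because any set shattered by $\mathcal{A}$ is already shattered by $\mathcal{F}$. For the second, if $T \subseteq [n-1]$ is shattered by $\mathcal{B}$, then for each $X \subseteq T$ there exist $F \in \mathcal{F}_0$ and $F' \in \mathcal{F}_1$ both restricting to $X$ on $T$; these two members of $\mathcal{F}$ differ only at coordinate $n$, so $\mathcal{F}$ shatters $T \cup \{n\}$, contradicting the hypothesis. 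Invoking the inductive hypothesis on $\mathcal{A}$ and $\mathcal{B}$ and applying Pascal's identity $\binom{n-1}{i} + \binom{n-1}{i-1} = \binom{n}{i}$ collapses the two sums to the bound in (\ref{eq:sauer}).

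Tightness is witnessed by the downward-closed family $\mathcal{F}^{\ast} = \{F \subseteq [n] : |F| \le s-1\}$, which has cardinality equal to the right-hand side of (\ref{eq:sauer}) and shatters no $s$-set, since shattering $S$ with $|S|=s$ would require $S \in \mathcal{F}^{\ast}$. I expect the induction itself to be routine once the two shattering claims above are set up correctly; the conceptual content lies in the observation that ``doubling up'' at coordinate $n$ promotes an $(s{-}1)$-shattered set of $\mathcal{B}$ to an $s$-shattered set of $\mathcal{F}$, which is exactly what allows the recursion to lose one in the shattering threshold while trading $n-1$ for $n$.
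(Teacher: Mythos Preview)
Your proof is correct and is the standard coordinate-splitting induction for Sauer's lemma. Note, however, that the paper does not actually supply a proof of Theorem~\ref{thm:sauer}: it is stated as a classical result with references to Sauer, Perles--Shelah, and Vapnik--Chervonenkis, and is used later as a tool (e.g., in Proposition~\ref{prop:egyesek} and for the lower bound (\ref{eq:lowerbound})). That said, your argument is exactly the $k=2$ specialization of the paper's ``Induction proof'' of Theorem~\ref{thm:ij-shatter}: the sets $\mathcal{A}$ and $\mathcal{B}$ correspond to $\mathcal{C}\vert_{[n-1]}$ and $\mathcal{C}_{0,1}^1$, and your Pascal recursion is the binary case of the multinomial recursion invoked there.

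One minor wording point: in the tightness paragraph you write that shattering $S$ ``would require $S\in\mathcal{F}^{\ast}$.'' Strictly, shattering requires some $F\in\mathcal{F}^{\ast}$ with $F\cap S=S$, i.e., $F\supseteq S$; since every member of $\mathcal{F}^{\ast}$ has size at most $s-1<|S|$, no such $F$ exists. The conclusion is the same, but the phrasing should reflect that it is the existence of a superset of $S$, not membership of $S$ itself, that fails.
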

Karpovsky and Milman~\cite{km} and independently Steele~\cite{st} gave a multivalued generalization
 of the result above.
Let $\mathcal{C}\subseteq (k)^n$ be a set of codewords (vectors).
A codeword $\mathbf{c}$ can also be viewed as a \emph{function} from $[n]$ to $(k)$.
The code $\mathcal{C}$ is said to shatter $S\subseteq [n]$ if
\begin{equation*} 
  \left\{\mathbf{c}\vert_S\colon \mathbf{c}\in\mathcal{C}\right\}= (k)^S,
  \end{equation*}
   the set of all functions from $S$ to $(k)$.
\begin{thm}\label{thm:multishatter}
[{\rm Karpovsky and Milman~\cite{km} and independently Steele~\cite{st}
(see also Frankl~\cite{frankl}, Alon \cite{al}, Anstee \cite{an})}]\enskip
 Let $1\le s\le n$ be an integer and let $\mathcal{C}\subseteq (k)^n$ be
 a set of codewords with no shattered set of size $s$. Then
\begin{equation}\label{eq:karpmill}
  |\mathcal{C}|\le \sum_{i=0}^{s-1}(k-1)^{n-i}\binom{n}{i}.
  \end{equation}
\end{thm}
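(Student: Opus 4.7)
The plan is to proceed by induction on $n$, treating the statement uniformly over $s$. Write $g(n,s):=\sum_{i=0}^{s-1}(k-1)^{n-i}\binom{n}{i}$ for the claimed bound. The base case $n=0$ is immediate, since then $|\mathcal{C}|\le 1$ and $g(0,s)=1$ for every $s\ge 1$. The backbone of the induction is the Pascal-type recurrence
$$g(n,s)=(k-1)\,g(n-1,s)+g(n-1,s-1),$$
which follows by applying $\binom{n}{i}=\binom{n-1}{i}+\binom{n-1}{i-1}$ and reindexing.

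To realize this recurrence combinatorially, I would split $\mathcal{C}$ on the value of the last coordinate. For each $v\in(k)$ set $\mathcal{C}_v=\{\mathbf{c}|_{[n-1]}:\mathbf{c}\in\mathcal{C},\ c_n=v\}\subseteq (k)^{n-1}$, and introduce the \emph{level sets}
$$\mathcal{D}^{(j)}=\{\mathbf{d}\in(k)^{n-1}:\mathbf{d}\in\mathcal{C}_v\text{ for at least }j\text{ values of }v\},\quad 1\le j\le k.$$
A standard double count of extensions gives the clean identity $|\mathcal{C}|=\sum_{v\in(k)}|\mathcal{C}_v|=\sum_{j=1}^{k}|\mathcal{D}^{(j)}|$, since a given $\mathbf{d}\in(k)^{n-1}$ with exactly $m$ extensions contributes $m$ to both sides.

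The shattering hypothesis transfers through these level sets. Every $\mathcal{D}^{(j)}$ is contained in the projection $\mathcal{D}^{(1)}=\bigcup_v\mathcal{C}_v$, so any set shattered by $\mathcal{D}^{(j)}$ lifts to a set of the same size shattered by $\mathcal{C}$; consequently each $\mathcal{D}^{(j)}$ has no $s$-shattered set. For $j=k$ the topmost level $\mathcal{D}^{(k)}=\bigcap_v\mathcal{C}_v$ is stronger: if $T\subseteq[n-1]$ with $|T|=s-1$ were shattered by $\mathcal{D}^{(k)}$, then every pattern on $T$ would admit all $k$ completions at coordinate $n$, yielding a shattered set $T\cup\{n\}$ of size $s$ in $\mathcal{C}$. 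Thus $\mathcal{D}^{(k)}$ has no $(s-1)$-shattered set.

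Applying the induction hypothesis gives $|\mathcal{D}^{(j)}|\le g(n-1,s)$ for $1\le j\le k-1$ and $|\mathcal{D}^{(k)}|\le g(n-1,s-1)$; summing and invoking the Pascal-type recurrence produces $|\mathcal{C}|\le (k-1)g(n-1,s)+g(n-1,s-1)=g(n,s)$, as required. I do not anticipate any genuine obstacle; the one point requiring care is the choice of level sets $\mathcal{D}^{(j)}$ so that $\sum_j|\mathcal{D}^{(j)}|$ recovers $|\mathcal{C}|$ exactly while the topmost set still inherits the sharper $(s-1)$-shattering bound that the extra last coordinate buys.
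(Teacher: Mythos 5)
Your proof is correct. Note that the paper does not prove Theorem~\ref{thm:multishatter} at all --- it is quoted as a known result of Karpovsky--Milman and Steele --- so there is no in-paper argument to match; your write-up is a valid, self-contained proof in the spirit of the classical ones (close to Alon's and Anstee's inductions). All the delicate points check out: the double count $\sum_v|\mathcal{C}_v|=\sum_{j=1}^k|\mathcal{D}^{(j)}|$ is exact, each $\mathcal{D}^{(j)}$ inherits ``no shattered $s$-set'' because it sits inside $\mathcal{C}|_{[n-1]}$, the top level $\mathcal{D}^{(k)}=\bigcap_v\mathcal{C}_v$ genuinely gains one (a shattered $(s-1)$-set $T$ there gives all $k$ extensions of every pattern, so $T\cup\{n\}$ is shattered), and the identity $g(n,s)=(k-1)g(n-1,s)+g(n-1,s-1)$ follows from Pascal's rule as you say. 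Treating $s$ uniformly is the right move: for $s=0$ the hypothesis forces $\mathcal{C}=\emptyset=g(n,0)$, and for $s>n$ one has $g(n,s)=k^n$ so the bound is vacuous, which covers the cases the induction can fall into. It is worth remarking that your decomposition is the natural $k$-ary refinement of what the paper does for its main theorem: in the ``induction proof'' of Theorem~\ref{thm:ij-shatter} the authors also split on the last coordinate, but they only extract pairwise classes $\mathcal{C}^i_{i,j}$ (prefixes extendable by both $i$ and $j$), since their hypothesis is only about $(i,j)$-shattering; your level sets $\mathcal{D}^{(j)}$ exploit the stronger full-alphabet shattering hypothesis, which is exactly what buys the single term $g(n-1,s-1)$ instead of one decremented term per pair.
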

An important difference between the bounds is that (\ref{eq:sauer}) is polynomial in $n$ (for fixed $s$),
but (\ref{eq:karpmill}) is exponential.
The same phenomenon happens when uniform set systems are considered.
The uniform version of Theorem~\ref{thm:sauer} was proven by Frankl and Pach \cite{FP}
 (for a strengthening and algebraic connections see Anstee et.al. \cite{ars}).
\begin{thm}\label{thm:fp}[{\rm  Frankl and Pach \cite{FP}}]\enskip
Let $n,d,s$ be positive integers such that $d\le n$ and $s\le n/2$.
Let $\mathcal{F}\subseteq\binom{[n]}{d}$ be a $d$-uniform set system that does not shatter
 an $s$-element set, then
\begin{equation*}
  |\mathcal{F}|\le\binom{n}{s-1}.
  \end{equation*}
  \end{thm}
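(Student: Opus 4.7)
I plan to use the polynomial method. The aim is to exhibit $|\mathcal{F}|$ linearly independent vectors sitting in a space of dimension at most $\binom{n}{s-1}$. The non-shattering hypothesis will supply a degree reduction; the $d$-uniformity will shrink the ambient space from the Sauer--Shelah sum $\sum_{i\leq s-1}\binom{n}{i}$ down to just $\binom{n}{s-1}$.

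\emph{Test polynomials.} To each $F\in\mathcal{F}$ associate $p_F(x):=\prod_{i\in F}x_i$. For $G\in\binom{[n]}{d}$ one has $p_F(\chi_G)=[F\subseteq G]$, which for two $d$-sets equals $\delta_{F,G}$. Hence the restrictions $p_F|_{\mathcal{F}}$, $F\in\mathcal{F}$, are linearly independent in $\mathbb{R}^{\mathcal{F}}$.

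\emph{Degree reduction from non-shattering.} For every $S\in\binom{[n]}{s}$ the hypothesis furnishes some $T\subseteq S$ with no $F\in\mathcal{F}$ satisfying $F\cap S=T$. Then
$$\prod_{i\in T}x_i\prod_{i\in S\setminus T}(1-x_i)$$
vanishes on $\chi_F$ for every $F\in\mathcal{F}$, and its degree-$s$ part equals $\pm\prod_{i\in S}x_i$. Consequently, modulo the ideal of polynomials vanishing on $\{\chi_F:F\in\mathcal{F}\}$, every squarefree monomial of degree $\geq s$ reduces to a multilinear polynomial of degree $\leq s-1$. In particular each $p_F|_\mathcal{F}$ is the restriction of some multilinear polynomial of degree at most $s-1$, so
$$|\mathcal{F}|\;\leq\;\dim\bigl(V_{\leq s-1}\big|_{\binom{[n]}{d}}\bigr),$$
where $V_{\leq s-1}$ is the space of multilinear polynomials in $n$ variables of degree at most $s-1$.

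\emph{Dimension count: the main obstacle.} The crucial remaining step is to show this dimension is $\binom{n}{s-1}$, not the larger Sauer--Shelah sum. Assume without loss of generality that $d\geq s-1$; otherwise $|\mathcal{F}|\leq\binom{n}{d}\leq\binom{n}{s-1}$ by $s\leq n/2$. Uniformity now enters through the identity
$$\sum_{\substack{T'\supseteq T\\|T'|=s-1}}[T'\subseteq G]\;=\;\binom{d-|T|}{s-1-|T|}\,[T\subseteq G]\qquad\bigl(G\in\tbinom{[n]}{d}\bigr),$$
whose binomial coefficient is nonzero. This lets one express the restriction of every monomial of degree $<s-1$ as a combination of restrictions of monomials of degree exactly $s-1$. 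So $V_{\leq s-1}|_{\binom{[n]}{d}}$ is spanned by the $\binom{n}{s-1}$ functions $G\mapsto[T'\subseteq G]$, $|T'|=s-1$; a standard inclusion-matrix rank argument (of Gottlieb type) shows these are in fact linearly independent under $s-1\leq\min(d,n-d)$, which holds by hypothesis. Combining, $|\mathcal{F}|\leq\binom{n}{s-1}$.

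The crux is this last step: the degree reduction by itself would merely recover $|\mathcal{F}|\leq\sum_{i=0}^{s-1}\binom{n}{i}$, so the sharper $\binom{n}{s-1}$ rests entirely on the $d$-uniform structure encoded by $\sum_j x_j=d$ on $\binom{[n]}{d}$.
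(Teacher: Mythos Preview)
The paper does not prove this theorem at all: Theorem~\ref{thm:fp} is quoted from Frankl and Pach~\cite{FP} as background, with no argument supplied. So there is no ``paper's own proof'' to compare against.

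Your proposal is essentially the original Frankl--Pach polynomial-method argument and is correct. One small remark: the Gottlieb-type independence you invoke at the end is not needed for the inequality. Once you show that on $\binom{[n]}{d}$ every multilinear monomial of degree $<s-1$ is a linear combination of monomials of degree exactly $s-1$ (your identity with the coefficient $\binom{d-|T|}{s-1-|T|}$, valid since $d\ge s-1$), you already have $\dim\bigl(V_{\le s-1}\big|_{\binom{[n]}{d}}\bigr)\le\binom{n}{s-1}$, and that suffices. The independence claim would require $s-1\le n-d$, which is not guaranteed by the stated hypotheses $d\le n$ and $s\le n/2$; fortunately you do not need it.
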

Recently, Heged\H{u}s and R\'onyai \cite{hr} gave two multivalued generalizations.
\begin{thm}\label{thm:hr-unif} [{\rm Heged\H{u}s and  R\'onyai \cite{hr}}]\enskip
 Let $0\le d\le (k-1)n$ and $s-1\le n/2$.
 Let $\mathcal{C}\subseteq (k)^n$ be a code with no shattered set of size $s$
 and suppose that $\sum_{i=1}^nc_i=d$ for every $\mathbf{c}\in\mathcal{C}$.
  Then
\begin{equation*}
  |\mathcal{C}|\le \sum_{i=0}^{s-1}(k-1)^{n-i}\left(\binom{n}{i}-\binom{n}{i-1}\right).
  \end{equation*}
\end{thm}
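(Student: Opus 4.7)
The plan is a polynomial-method argument that superimposes two reductions visible in the target bound: the Karpovsky--Milman non-shattering reduction (giving the factor $(k-1)^{n-i}\binom{n}{i}$) and a Frankl--Pach style shift enforced by the weight constraint (which replaces $\binom{n}{i}$ by $\binom{n}{i}-\binom{n}{i-1}$). To each codeword $\mathbf{c}\in\mathcal{C}$ I would associate the Lagrange polynomial
\[
 f_\mathbf{c}(x_1,\dots,x_n)=\prod_{i=1}^n L_{c_i}(x_i),\qquad L_j(x)=\prod_{\ell\in(k),\,\ell\ne j}\frac{x-\ell}{j-\ell},
\]
so that $f_\mathbf{c}(\mathbf{c}')=\delta_{\mathbf{c},\mathbf{c}'}$ on $(k)^n$; the family $\{f_\mathbf{c}\}_{\mathbf{c}\in\mathcal{C}}$ is then linearly independent. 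I would pass to the quotient $R=\mathbb{R}[x_1,\dots,x_n]/I$, where $I$ is generated by the alphabet relations $\prod_{\ell\in(k)}(x_i-\ell)$ for each $i$ together with the weight relation $x_1+\cdots+x_n-d$. Since $\mathcal{C}$ lies inside the variety $V_d=\{\mathbf{z}\in(k)^n:\sum z_i=d\}$ of this ideal, the images $\bar f_\mathbf{c}\in R$ stay linearly independent, and $|\mathcal{C}|\le\dim_\mathbb{R} W$ for any subspace $W\subseteq R$ containing them all.

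The Karpovsky--Milman ingredient is the following: for each $s$-subset $S\subseteq[n]$ pick a missing restriction $\mathbf{v}_S:S\to(k)$ and note that $g_S(\mathbf{x})=\prod_{i\in S}L_{v_S(i)}(x_i)$ vanishes identically on $\mathcal{C}$. Using the $g_S$ together with the alphabet relations one rewrites each $\bar f_\mathbf{c}$ as a combination of monomials $\prod x_i^{\alpha_i}$ in which at most $s-1$ exponents are ``distinguished'' (say, equal to $k-1$); the remaining $n-i$ coordinates each admit $k-1$ non-distinguished exponents, giving the Karpovsky--Milman count $\sum_{i=0}^{s-1}(k-1)^{n-i}\binom{n}{i}$ of Theorem~\ref{thm:multishatter}. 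Applying on top of this the weight relation $\sum x_j=d$ within each layer of distinguished sets of size $i$ eliminates a single degree of freedom per layer, turning the factor $\binom{n}{i}$ into $\binom{n}{i}-\binom{n}{i-1}$ exactly as in the original proof of Frankl--Pach (Theorem~\ref{thm:fp}). The specialization $k=2$ telescopes to $\binom{n}{s-1}$, confirming that the formula is the correct common generalization.

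The main obstacle is organizing these three families of relations---alphabet, non-shattering, weight---into a single consistent normal form, most naturally via a Gr\"obner basis for the combined ideal generated by $I$ and by the $g_S$, chosen so that the standard monomials are exactly the $\sum_{i=0}^{s-1}(k-1)^{n-i}(\binom{n}{i}-\binom{n}{i-1})$ monomials counted above. The hypothesis $s-1\le n/2$ should enter precisely in this step, as it is exactly the range in which $\binom{n}{i}-\binom{n}{i-1}\ge 0$ throughout the sum; outside this range the Frankl--Pach layer dimensions cease to be valid and the argument must be supplemented. Carrying out the Gr\"obner-basis bookkeeping---and in particular verifying that no monomial is double-counted when the weight relation is applied to the already-reduced Karpovsky--Milman span---is where the bulk of the technical work lies.
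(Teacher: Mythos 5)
First, a remark on context: the paper does not prove Theorem~\ref{thm:hr-unif} at all --- it is quoted verbatim from Heged\H{u}s and R\'onyai \cite{hr} as background, so there is no in-paper argument to compare yours against. Judged on its own terms, your proposal correctly identifies the general method used by those authors (linear independence of the Lagrange interpolation polynomials $f_{\mathbf{c}}$, reduction modulo an ideal containing the alphabet relations, the non-shattering witnesses $g_S$, and the weight relation, followed by a count of standard monomials), and the first half is sound: the $g_S$ have leading monomial $\prod_{i\in S}x_i^{k-1}$, so reduction modulo the alphabet relations and the $g_S$ does land every $\bar f_{\mathbf{c}}$ in the span of monomials with all exponents at most $k-1$ and fewer than $s$ exponents equal to $k-1$, recovering the Karpovsky--Milman count $\sum_{i=0}^{s-1}(k-1)^{n-i}\binom{n}{i}$.

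The genuine gap is the final step, which you explicitly defer: you assert that adjoining the single linear relation $x_1+\cdots+x_n-d$ removes exactly $(k-1)^{n-i}\binom{n}{i-1}$ standard monomials from the layer with $i$ distinguished coordinates, but nothing in the proposal produces the ideal elements whose leading terms realize this. A single linear generator, reduced na\"{\i}vely in a fixed term order, eliminates one \emph{variable}, not one ``degree of freedom per layer,'' and gives a completely different (and wrong) count; to get $\binom{n}{i}-\binom{n}{i-1}$ one needs the full description of the standard monomials of the vanishing ideal of the layer $V_d$ (the ballot-type characterization that Heged\H{u}s and R\'onyai import from their earlier work on Gr\"obner bases of uniform and $d$-weight families). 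Moreover, one must then show that these new leading monomials are \emph{compatible} with the $g_S$-reduction --- i.e., that the elements witnessing them can be chosen with leading monomials that are still standard for the Karpovsky--Milman part, since in general $\dim\mathbb{R}[x]/(I+J)$ is not obtained by intersecting the standard monomial sets of $I$ and $J$. Both of these are precisely where the theorem's content lies, and neither is carried out; as written the argument proves only the weaker bound of Theorem~\ref{thm:multishatter}. (Your observation about the role of $s-1\le n/2$ is consistent with the statement, but it is a necessary condition for the formula to be monotone, not a proof that the hypothesis suffices.)
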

Note that this bound is exponential in $n$.
\begin{thm}\label{thm:hr-hamming}  [{\rm Heged\H{u}s and  R\'onyai \cite{hr}}]\enskip
Let $0\le d\le n$ and $0\le d+s\le n+1$.
Let $\mathcal{C}\subseteq (k)^n$ be a code with no shattered set of size $s$
 and suppose that $|\{i\in [n]\colon c_i\ne 0\}|=d$ for every $\mathbf{c}\in\mathcal{C}$.
 Then
\begin{equation*}
  |\mathcal{C}|\le \binom{n}{s-1}\sum_{i=0}^{d}(k-2)^{i}\binom{n-s+1}{i}.
  \end{equation*}
\end{thm}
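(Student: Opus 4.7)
The plan is to adapt the shifting/Frankl–Pach style argument of Theorem~\ref{thm:fp} to the multivalued setting with a fixed Hamming support. I would first introduce two shifting operators on $\mathcal{C}\subseteq(k)^n$: a positional shift that, for each pair of indices $i<j$, exchanges a zero at coordinate $i$ with a nonzero at coordinate $j$ whenever the resulting vector is not already in $\mathcal{C}$; and a value shift that, at each coordinate $\ell$ and each pair of nonzero values $a<b$, replaces $c_\ell=b$ by $c_\ell=a$ whenever possible. Both operations preserve $|\mathcal{C}|$ and the constant Hamming weight $d$, and a direct inspection shows they cannot create a newly shattered set. Replacing $\mathcal{C}$ by the stable image under these two shifts reduces the problem to a canonical ``down-closed'' code of the same cardinality.

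In the canonical case I would prove a structural claim: each codeword $\mathbf{c}$ is uniquely described by a pair $(W(\mathbf{c}),\tau(\mathbf{c}))$, where $W(\mathbf{c})\subseteq[n]$ has size $s-1$ (contributing the factor $\binom{n}{s-1}$) and $\tau(\mathbf{c})$ is an assignment on the remaining $n-s+1$ coordinates whose nonzero support has size $i\le d$ and uses at each support position only one of $k-2$ allowable nonzero values (contributing $\sum_{i=0}^{d}(k-2)^i\binom{n-s+1}{i}$). Injectivity of the map $\mathbf{c}\mapsto(W(\mathbf{c}),\tau(\mathbf{c}))$ together with the count of such pairs then yields the claimed bound. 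The role of the constant-weight hypothesis is to force $\tau(\mathbf{c})$ to fit inside the constrained range $\{0,\dots,d\}$ for $i$ via the Frankl–Pach witness construction.

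The main obstacle is the appearance of the factor $k-2$ rather than $k-1$ at each ``free'' coordinate of $\tau$. This is exactly where the fixed Hamming weight must interact with the no-shatter hypothesis: one has to show that if all $k-1$ nonzero values were realizable at some free coordinate $\ell$, then combining the value $0$ at $\ell$ with a sequence of down-shifts across the already-selected $s-1$ coordinates of $W$ would reconstruct an $s$-element shattered set in $\mathcal{C}$, contradicting the hypothesis. Setting up the right total order on $(k)$ so that the ``forbidden'' value at each free coordinate is canonically defined, and verifying this forbidden-value lemma, is the technically delicate step and the place where the present bound diverges from the exponential estimate of Theorem~\ref{thm:multishatter}.
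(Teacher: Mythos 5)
The paper itself gives no proof of Theorem~\ref{thm:hr-hamming}: it is quoted verbatim from Heged\H{u}s and R\'onyai \cite{hr}, whose argument is algebraic (standard monomials of the vanishing ideal of $\mathcal{C}$, in the spirit of the Gr\"obner-basis strengthening of Frankl--Pach in \cite{ars}), not a shifting argument. So there is nothing in this paper to match your approach against, and your proposal must stand on its own. It does not yet: it is a plan whose two load-bearing steps are asserted rather than proved. First, the claim that your positional (exchange) shift and value shift ``cannot create a newly shattered set'' by ``direct inspection'' is exactly the point that is doubtful. For the standard down-compression $c_\ell\mapsto 0$ one can control traces, but your positional shift is the Frankl-type exchange operation needed to preserve constant weight $d$, and already in the base case $k=2$ (Theorem~\ref{thm:fp}) no shifting proof of the uniform bound is known --- the known proofs go through inclusion matrices or polynomials precisely because exchange shifts do not obviously preserve the non-shattering hypothesis. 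You would need to exhibit, for any $s$-set $S$ shattered after the shift, an $s$-set shattered before it; as written this is a conjecture, not an inspection.

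Second, the entire content of the bound lives in the injection $\mathbf{c}\mapsto(W(\mathbf{c}),\tau(\mathbf{c}))$ and in the ``forbidden-value lemma'' that forces only $k-2$ admissible nonzero values at each free coordinate; you explicitly defer both. Without a definition of $W(\mathbf{c})$ (why does every codeword single out an $(s-1)$-set, and why is the map injective?) and without the argument that realizing all $k-1$ nonzero values at a free coordinate reconstructs a shattered $s$-set, the counting identity $\binom{n}{s-1}\sum_{i=0}^{d}(k-2)^{i}\binom{n-s+1}{i}$ is only a numerological match with the target. Note also that the hypothesis $d+s\le n+1$ must enter somewhere --- in \cite{hr} it guarantees the relevant standard monomials fit, whereas in your sketch it plays no visible role, which is a further sign that the decomposition has not actually been derived. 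To salvage the approach you would either have to prove the shift-invariance lemma (which I believe fails, or at least is a genuinely open combinatorial route even for $k=2$), or abandon shifting and follow the polynomial method as in \cite{hr} and \cite{ars}.
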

One cannot expect an exponential bound here 
 since the total number of codewords with support of size $d$ is polynomial.
     \par
A code $\mathcal C$ and the corresponding matrix $\mathcal{M}$ formed by the
 codewords are called {\it reverse-free} if $\mathcal M$ does not have a
 submatrix of the form $\left(\begin{array}{cc}a&b\\b&a\end{array}\right)$ for any
 distinct $a$ and $b$.
How large a reverse-free code ${\mathcal C}\subset (k)^n$ can be?
It was proved in~\cite{fkms} that
\begin{equation}\label{eq:fkms}
\max |{\mathcal C}|=\Theta\left(n^{\binom{k}{2}}\right).
\end{equation}

This can lead to the following version of {\it multivalued shattering}.
Let $\mathcal{C}\subseteq (k)^n$ be a set of codewords.
$\mathcal{C}$ $(i,j)$-shatters $S\subseteq [n]$ if $\mathcal{C}\vert_S$ contains all $2^{|S|}$
  functions from $S$ to $\{i,j\}$. Let $k\ge 2$ be a fixed integer,
$\vec{s}=(s_{0,1},s_{0,2},\ldots s_{k-2,k-1})$ be a
positive integer vector of length $\binom{k}{2}$ whose entries are indexed by
 ordered pairs $(i,j)$ with $0\leq i<j\leq k-1$.

The main result of the present paper is the following theorem.
\begin{thm}\label{thm:ij-shatter}
Suppose that $\mathcal{C}\subset (k)^n$ does not $(i,j)$-shatter any coordinate set of
 size $s_{i,j}\geq 1$ for every $0\leq i< j\leq k-1$.
Then
\begin{equation}\label{eq:ij-shatter}
 |\mathcal{C}| \leq \sum_{0\le \alpha_{i,j}\le s_{i,j}-1}
 \binom{n}{\alpha_{0,1},\alpha_{0,2},\ldots ,\alpha_{k-2,k-1},n-\sum_{0\le i<j\le k-1}\alpha_{i,j}}
 ={\displaystyle O\left(n^{p}\right),}
\end{equation}
  where the sum is taken for all possible choices of $\alpha_{i,j}$'s and
  $p=\sum_{0\le i<j\le k-1}(s_{i,j}-1)$.

On the other hand, when $p$ is fixed and $n\to \infty$ then there exist codes
$\mathcal{C}\subset (k)^n$ such that they do not $(i,j)$-shatter any coordinate set of
 size $s_{i,j}\geq 1$ for every $0\leq i< j\leq k-1$ and
\begin{equation}\label{eq:lowerbound}
|\mathcal{C}|=\Omega\left(n^{p}\right).
\end{equation}
\end{thm}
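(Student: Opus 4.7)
The plan is to prove the upper bound by a natural induction on $n$, mirroring the classical Sauer--Shelah argument, and to realize the lower bound through a recursive construction that is also built up by induction on $n$.

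For the upper bound, split $\mathcal{C}$ according to the value at the last coordinate. Let $\mathcal{A}=\mathcal{C}|_{[n-1]}$ denote the set of distinct $(n-1)$-prefixes; for each $\mathbf{p}\in\mathcal{A}$ let $V(\mathbf{p})=\{c_{n}:\mathbf{c}\in\mathcal{C},\ \mathbf{c}|_{[n-1]}=\mathbf{p}\}$; and for each pair $0\le i<j\le k-1$ set $\mathcal{B}_{i,j}=\{\mathbf{p}\in\mathcal{A}:\{i,j\}\subseteq V(\mathbf{p})\}$. Since $t-1\le\binom{t}{2}$ for every non-negative integer $t$,
\[
|\mathcal{C}|=\sum_{\mathbf{p}\in\mathcal{A}}|V(\mathbf{p})|\le|\mathcal{A}|+\sum_{0\le i<j\le k-1}|\mathcal{B}_{i,j}|.
\]
The projection $\mathcal{A}$ inherits every no-shatter hypothesis of $\mathcal{C}$; any $(i,j)$-shattered set of $\mathcal{B}_{i,j}$ lifts, by appending coordinate $n$, to an $(i,j)$-shattered set one larger in $\mathcal{C}$, so $\mathcal{B}_{i,j}$ satisfies the hypotheses with $s_{i,j}$ reduced by one. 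Writing $F(n,\vec{s})$ for the right-hand side of (\ref{eq:ij-shatter}) and $e_{i,j}$ for the unit vector in the $(i,j)$-coordinate, the induction hypothesis yields $|\mathcal{A}|\le F(n-1,\vec{s})$ and $|\mathcal{B}_{i,j}|\le F(n-1,\vec{s}-e_{i,j})$; the standard multinomial Pascal identity
\[
F(n,\vec{s})=F(n-1,\vec{s})+\sum_{0\le i<j\le k-1}F(n-1,\vec{s}-e_{i,j}),
\]
obtained by deciding into which slot a distinguished element of $[n]$ falls, closes the recursion. The dominant multinomial in $F(n,\vec{s})$ is $\binom{n}{s_{0,1}-1,\ldots,s_{k-2,k-1}-1,n-p}=\Theta(n^{p})$, so $|\mathcal{C}|=O(n^{p})$.

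For the lower bound I would build $\mathcal{C}(n)$ recursively, carrying along auxiliary sub-codes. Assume inductively that $\mathcal{C}(n)\subset(k)^{n}$ satisfies $\vec{s}$ and contains pairwise disjoint sub-codes $\mathcal{D}_{v}(n)$ for $v=1,\dots,k-1$, where each $\mathcal{D}_{v}(n)$ avoids $(u,v)$-shattering of $(s_{u,v}-1)$-sets for every $u<v$. Define
\[
\mathcal{C}(n+1)=\{(\mathbf{c},0):\mathbf{c}\in\mathcal{C}(n)\}\cup\bigcup_{v=1}^{k-1}\{(\mathbf{c},v):\mathbf{c}\in\mathcal{D}_{v}(n)\}.
\]
Shatterings of $\mathcal{C}(n+1)$ that avoid coordinate $n+1$ reduce to shatterings of $\mathcal{C}(n)$, ruled out by the inductive hypothesis. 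An $(i,j)$-shattering of $S\cup\{n+1\}$ with $|S|=s_{i,j}-1$ and $i<j$ would force, by looking at the codewords ending in $j$, that $\mathcal{D}_{j}(n)$ itself $(i,j)$-shatter $S$---forbidden by the invariant. Each step contributes $\sum_{v>0}|\mathcal{D}_{v}(n)|$ new codewords, and a careful inductive choice making $|\mathcal{D}_{v}(n)|=\Omega(n^{\,p-v})$ gives growth $\Omega(n^{\,p-1})$ per step, so $|\mathcal{C}(n)|=\Omega(n^{p})$.

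The main obstacle lies on the lower bound side. A single-partition construction $[n]=\bigcup_{i<j}T_{i,j}\cup T_{*}$ with $|T_{i,j}|=s_{i,j}-1$ produces only $2^{p}=O(1)$ codewords, and naively unioning such constructions over many partitions introduces unintended shattered sets. The recursion above sidesteps these pitfalls by controlling, via the disjoint $\mathcal{D}_{v}(n)$, exactly which prefixes are paired with which values at the new coordinate. The delicate part is maintaining the strengthened inductive invariant---that $\mathcal{C}(n)$ indeed contains pairwise disjoint sub-codes $\mathcal{D}_{v}(n)$ of the required sizes satisfying the tightened shattering constraints---along the induction.
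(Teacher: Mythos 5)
Your upper bound argument is essentially the paper's own induction proof: the paper splits off, for each pair $i<j$, the set of $(n-1)$-prefixes that extend by both symbols $i$ and $j$ (its $\mathcal{C}_{i,j}^{j}$ plays the role of your $\mathcal{B}_{i,j}$), derives the same recursion $|\mathcal{C}|\le |\mathcal{C}\vert_{[n-1]}|+\sum_{0\le i<j\le k-1}|\mathcal{C}_{i,j}^{j}|$, and closes it with the multinomial Pascal identity. Your lifting argument (an $(i,j)$-shattered $(s_{i,j}-1)$-set of $\mathcal{B}_{i,j}$ extends by the coordinate $n$ to an $(i,j)$-shattered $s_{i,j}$-set of $\mathcal{C}$) is exactly the paper's justification, so this half is correct; you should still record the trivial base case $n=1$, where the bound reduces to an estimate on a clique number, but that is routine.

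The lower bound, however, has a genuine gap. Your recursion hinges on the inductive invariant that $\mathcal{C}(n)$ contains pairwise disjoint sub-codes $\mathcal{D}_{v}(n)$ of size $\Omega(n^{p-v})$ satisfying the tightened shattering constraints, and you never show that this invariant can be propagated: after forming $\mathcal{C}(n+1)$ you must exhibit new sub-codes $\mathcal{D}_{v}(n+1)\subseteq\mathcal{C}(n+1)$ with the same properties and the larger required sizes, and nothing in your construction produces them. You explicitly flag this as ``the delicate part,'' but it is precisely the content of the lower bound, so as written the proof of (\ref{eq:lowerbound}) is incomplete. Moreover, the ``obstacle'' you raise against partition-based constructions is illusory, and the paper's construction (Proposition~\ref{prop:lowerbound}) is exactly of that type: partition $[n]$ into $\binom{k}{2}$ blocks $T_{i,j}$ of size $n/\binom{k}{2}$ each (not of size $s_{i,j}-1$), on block $T_{i,j}$ place an extremal Sauer--Shelah family over the two-letter alphabet $\{i,j\}$ with no shattered $s_{i,j}$-set, hence with $\Theta\bigl(n^{s_{i,j}-1}\bigr)$ rows, and take the product (concatenation) of one row from each block in all possible ways. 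Any $(i,j)$-shattered set must consist of coordinates in which both symbols $i$ and $j$ occur, hence lies entirely inside $T_{i,j}$, where no $s_{i,j}$-set is shattered; and the product has $\prod_{i<j}\Theta\bigl(n^{s_{i,j}-1}\bigr)=\Theta\left(n^{p}\right)$ codewords. No union over many partitions, and no recursive bookkeeping, is needed; I would replace your recursive construction by this one, or else supply the missing induction maintaining the $\mathcal{D}_{v}(n)$.
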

In other words, if  $\forb (n, \vec{s})$ denotes the maximum
number of codewords of  a code $\mathcal{C}$ of length $n$ over the alphabet
$(k)$ such that $\mathcal{C}$ does not $(i,j)$-shatter any coordinate set of
size $s_{i,j}$ then
\begin{equation*}
  \forb (n, \vec{s})=\Theta(n^p).
   \end{equation*}

\section{A hierarchy of Vapnik-Chervonenkis type dimensions}
The \emph{{\rm VC}-dimension} of a set system $\mathcal{F}\subseteq 2^{[n]}$ is the maximum $d$
 that $ \mathcal{F}$ shatters a set of size $d$.
Theorem~\ref{thm:sauer} bounds the size of a set system whose VC-dimension is less than $s$.
Vapnik and Chervonenkis used it for bounds on the sample size necessary to obtain uniformly
 good empirical estimates for the expectations of all random variables of a given class.
Since then it has found applications in learning theory, such as concepts with bounded
 VC-dimensions are effectively learnable. \par
Theorem~\ref{thm:multishatter} allows the definition of another dimension,
 \emph{{\rm KM}-dimension} of codes (systems of multisets) as follows.
The KM-dimension of  $\mathcal{C}\subseteq (k)^n$ is the maximum $d$ that $\mathcal{C}$
 shatters a set of size $d$.
Theorem~\ref{thm:multishatter} gives a bound on the size of a code of KM-dimension less than $s$.
However, this bound is exponential function of $n$.\par
Haussler and Long \cite{hl} introduced other generalizations of VC-dimension,
 motivated by statistical applications. The \emph{G-dimension} of $\mathcal{C}\subseteq (k)^n$
 is the maximum $d$ that there exists a vector $\vec{y}=(y_1,y_2,\ldots y_d)\in (k)^d$ and a
 subset $D=\{i_1,i_2,\ldots i_d\}\subseteq [n]$ such that for all subsets $I\subseteq D$
 there exists $\mathbf{c}=(c_1,c_2,\ldots c_n)\in\mathcal{C}$  such that $c_{i_j}=y_j$ for
 $i_j\in I$ and $c_{i_t}\ne y_t$ for  $i_t\not\in I$.

The \emph{\rm{P}-dimension} of $\mathcal{C}$ is the maximum $d$ that there exists a vector
 $\vec{y}$ and a subset $|D|=d$ of $[n]$ such that for all subsets $I\subseteq D$ there exists
 $\mathbf{c}\in\mathcal{C}$ such that $c_{i_j}\ge y_j$ for $i_j\in I$ and $c_{i_t}< y_t$ for
 $i_t\not\in I$.

The \emph{{\rm GP}-dimension} of $\mathcal{C}$ is the maximum $d$ that there exists a vector
 $\vec{y}$ and a subset $|D|=d$ of $[n]$ such that for all subsets $I\subseteq D$ there exists
 $\mathbf{c}\in\mathcal{C}$ such that $c_{i_j}=y_j$ for $i_j\in I$ and $c_{i_t}< y_t$ for  $i_t\not\in I$.

Finally, the \emph{{\rm N}-dimension} (or Natarajan-dimension~\cite{Na}) of $\mathcal{C}$ is
 the maximum $d$ that there exist  vectors $\vec{y}$ and $\vec{z}$ with $z_i<y_i\colon i=1,2,\ldots d$
 and a subset $|D|=d$ of $[n]$ such that for all subsets
 $I\subseteq D$ there exists $\mathbf{c}\in\mathcal{C}$ such that $c_{i_j}=y_j$ for $i_j\in I$ and
 $c_{i_t}=z_t$ for  $i_t\not\in I$.\par
It is easy to see that each of the above dimensions coincide with the VC-dimension in the case of $k=1$.
We also have
\begin{equation}\label{eq:dimensions}
\dim_{\rm KM}(\mathcal{C})\le \dim_{\rm N}(\mathcal{C})\le \dim_{\rm GP}(\mathcal{C})
   \le\left\{
\begin{array}{c} \dim_{\rm G}(\mathcal{C})\\ \dim_{\rm P}(\mathcal{C})\end{array}\right.
\end{equation}
The concept of $(i,j)$-shattering allows us to define a new dimension which is between
 KM-dimension and N-dimension.

The \emph{bi-dimension} of $\mathcal{C}\subseteq (k)^n$ is the maximum $d$ that there exist
 $i<j\in (k)$ and and a set $D\subseteq [n]$ of size $d$ that  $\mathcal{C}$   $(i,j)$-shatters $D$.
If a set $D$ is KM-shattered by $\mathcal{C}$, then $\mathcal{C}\vert_D$ is the set of all functions
 from  $D$ to $(k)$, in particular it contains all functions from $D$ to $\{i,j\}$ for any pair
 $i<j\in (k)$, so $D$ is $(i,j)$-shattered by $\mathcal{C}$.
This shows $$\dim_{\rm KM}(\mathcal{C})\leq \dim_{\rm bi}(\mathcal{C}).$$
 On the other hand, if $D$ is $(i,j)$-shattered by $\mathcal{C}$,
 then $D$ satisfies the condition of N-dimension with vectors $\vec{z}=(i,i,\ldots ,i)$ and
 $\vec{y}=(j,j,\ldots ,j)$, so the N-dimension of $\mathcal{C}$ is at least as large as its bi-dimension.
\par
Let $\mathcal{M}_{\rm X}(n,s)$ denote the maximum size of a code of length $n$
 and $\rm X$-dimension not exceeding $s$ (${\rm X}\in\{{\rm KM, bi, N,GP, G,P}\}$).
Then (\ref{eq:dimensions}) and the observations above imply
\begin{equation*}
\left.
\begin{array}{c}\mathcal{M}_{\rm G}(n,s) \\\mathcal{M}_{\rm P}(n,s)\end{array}\right\}\le
\mathcal{M}_{\rm GP}(n,s)\le \mathcal{M}_{\rm N}(n,s)\le \mathcal{M}_{\rm bi}(n,s)\le
\mathcal{M}_{\rm KM}(n,s).
\end{equation*}
In fact, Haussler and Long \cite{hl} proved that
$$\mathcal{M}_{\rm G}(n,s)=\mathcal{M}_{\rm P}(n,s)=\mathcal{M}_{\rm GP}(n,s)
    = \sum_{0\leq i\leq s}{n\choose i}(k-1)^i.
$$
  $$\mathcal{M}_{\rm N}(n,s)\leq \sum_{0\leq i\leq s}{n\choose i}{k \choose 2}^i.$$
These bounds are polynomial in $n$.
Theorem~\ref{thm:ij-shatter} implies that $\mathcal{M}_{\rm bi}(n,s)$ is polynomial, as well,
 since $\mathcal{M}_{\rm bi}(n,s)=\forb(n,\vec{s})$ for the vector $\vec{s}$ whose
 coordinates are all $s+1$.
However, $\mathcal{M}_{\rm KM}(n,s)$ is exponential according to Theorem~\ref{thm:multishatter}.
An extremal property of bi-dimension 
  is that it is the weakest restriction that
still results in polynomial bound. Indeed, if there is a pair of symbols $i,j$
such that there is no restriction involving only that pair, then one can
select all codewords  $\mathcal{C}=\{i,j\}^n$ so that $\mathcal{C}$ does not
violate any restrictions yet it is of exponential size.

\section{Proofs}
In this section we give two versions of the proof of the upper bound in Theorem~\ref{thm:ij-shatter}.
The lower bound (\ref{eq:lowerbound}) follows from Proposition~\ref{prop:lowerbound}. \newline
\paragraph{Branching proof.}\enskip
Let $\mathcal{C}\subset (k)^n$ be a code avoiding an $(i,j)$-shattered set of size $s_{i,j}$
 for all $0\leq i< j\leq k-1$.
 The following branching process will be applied to $\mathcal{C}$ successively $n$ times.

Let $\mathcal{B}$ be a set of codewords of length $t\ge 1$ over alphabet $(k)$.
Let  $\mathcal{B}_0$ denote the set of suffices of length $t-1$ of codewords in $\mathcal{B}$.
Note, that if $t=1$, then  $\mathcal{B}_0$ has one element, the \emph{empty string}.
If a codeword $\mathbf{b}\in \mathcal{B}_0$ appears with more than one first coordinate in
 $\mathcal{B}$, say with $i_1 < i_2 < \ldots < i_w$, then $\mathbf{b}$ will be put into the $(w-1)$
 sets $\mathcal{B}_{i_1,i_2}, \mathcal{B}_{i_1,i_3},\ldots ,\mathcal{B}_{i_1,i_w}$.
We get
\begin{equation*}
|\mathcal{B}|=|\mathcal{B}_0|+\sum_{0\le i<j\le k-1} |\mathcal{B}_{i,j}|.
\end{equation*}
$\mathcal{B}_{i,j}$ is said to be obtained by $(i,j)$-\emph{branching at step} $t$ from
 $\mathcal{B}$.
\par
Thus, the process starts with $\mathcal{B} = \mathcal{C}$ and $t=n$, and continues with
 $t=n-1,n-2, \ldots ,1$.
At step $t$ every set of codewords obtained at step $t+1$ is branched.
At the end, there are $|\mathcal{C}|$ singleton sets each containing the empty string.
For an example see Figure~\ref{fig:branching}.
\begin{figure}
 \input{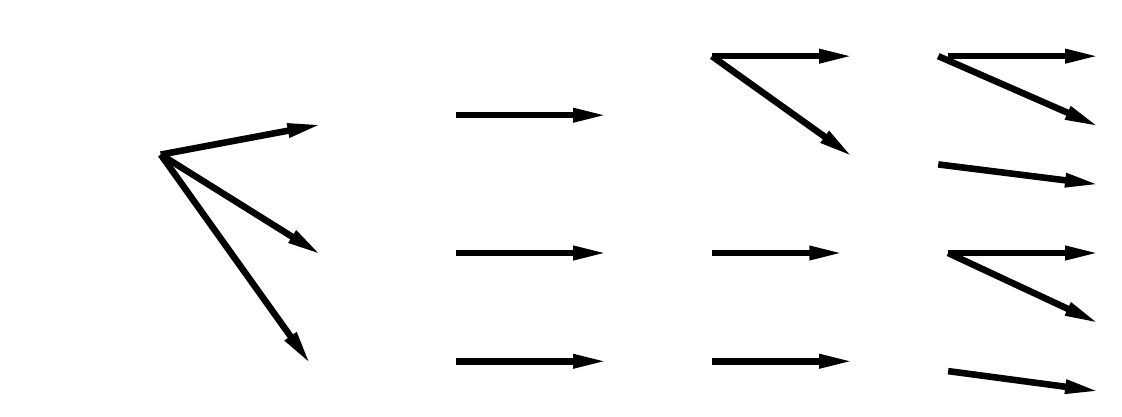_t}
\caption{Branching example}\label{fig:branching}
\end{figure}
Every singleton set is a result of a series of branchings, say $\alpha_{i,j}$ $(i,j)$-branchings
 for $0\le i<j\le k-1$.
If $\alpha_{i,j}\ge s_{i,j}$ for some pair $i,j$, and these branchings occur at steps
 $t_1,t_2,\ldots t_{\alpha_{i,j}}$, then $\mathcal{C}$ $(i,j)$-shatters the set
 $\{t_1,t_2,\ldots t_{\alpha_{i,j}}\}$ that contradicts the assumptions.
The maximum possible number of singleton sets with $\alpha_{i,j}$ $(i,j)$-branchings is equal to
 the number of $n$-permutations of $\alpha_{i,j}$ objects of type $(i,j)$ for $0\le i<j\le k-1$
 and $n-{\displaystyle \sum_{0\le i<j\le k-1}\alpha_{i,j}}$ objects of ``no branching'' type,
 which is exactly the multinomial coefficient
  $$\binom{n}{\alpha_{0,1},\alpha_{0,2},\ldots ,\alpha_{k-2,k-1},n-\sum_{0\le i<j\le k-1}\alpha_{i,j}}.
   $$
This provides the upper bound (\ref{eq:ij-shatter}). \hfill $\square$
   \paragraph{Induction proof.}\enskip
Let $\mathcal{C}_{i,j}^i\subseteq \mathcal{C}$ consist of those codewords $\mathbf{c}$ that $c_n=i$
 and there exists a codeword $\mathbf{c}'\in\mathcal{C}$ that only differs from  $\mathbf{c}$
 in the last coordinate and  $c'_n=j$.
$\mathcal{C}_{i,j}^j\subseteq \mathcal{C}$ is defined similarly.
If $s_{i,j}=1$, then both  $\mathcal{C}_{i,j}^i$ and $\mathcal{C}_{i,j}^j$ are
empty.
Otherwise, let $\vec{s}_{i,j}$ be the vector obtained from $\vec{s}$ by decreasing the
$(i,j)$\textsuperscript{th} coordinate by one.
Then obviously $|\mathcal{C}_{i,j}^i|=|\mathcal{C}_{i,j}^j|\le \forb(n-1,\vec{s}_{i,j})$.
Let $\mathcal{C}\vert_{[n-1]}=\{\mathbf{c}\vert_{[n-1]}\colon \mathbf{c}\in\mathcal{C}\}$
 be the set of length $n-1$ prefixes of codewords in  $\mathcal{C}$.
Clearly, $\mathcal{C}\vert_{[n-1]}\le \forb(n-1,\vec{s})$. On the other hand,
\begin{equation}\label{eq:induction}
| \mathcal{C}|\le |\mathcal{C}\vert_{[n-1]}|+\sum_{0\le i<j\le k-1}|\mathcal{C}_{i,j}^j|.
\end{equation}
In order to prove (\ref{eq:ij-shatter})
 using induction we have to give upper bound for $\forb(1, \vec{s})$.
In this case $i$ and $j$ both can be codewords in $\mathcal{C}$ iff $s_{i,j}>1$.
Let $G_{\vec{s}}=((k),E)$ be the graph on vertex set $(k)$ be defined by $\{i,j\}\in E\iff s_{i,j}>1$.
Then
\begin{equation}\label{eq:graph}
\forb(1, \vec{s})=\omega(G_{\vec{s}}).
\end{equation} It is an easy exercise that the right hand side of (\ref{eq:ij-shatter})
 is an upper bound for this clique number in case of $n=1$.
The bound in (\ref{eq:ij-shatter}) follows from (\ref{eq:induction}) and (\ref{eq:graph})
 using induction and the well-known recurrence for the multinomial coefficients.
\hfill $\square$

\section{Forbidden configurations}
Another generalization or sharpening of Theorem~\ref{thm:sauer}
considers \emph{forbidden configurations}.
We say a (0,1)-matrix is \emph{simple} if there are no repeated rows.
Given a (0,1)-matrix $F$, we say a matrix $A$ has $F$ as a \emph{configuration} denoted $F\in A$,
 if there is a submatrix of $A$ which is a row and column permutation of $F$.
Let $|A|$ denote the number of rows of matrix $A$. We define
\begin{equation}\label{eq:forbdef}
\forb(n,F)=\max\{|A|\colon A \hbox{ is a simple 0-1 matrix
    without configuration }F \hbox{ of }n\hbox{ columns}\}.
\end{equation}
A simple (0,1)-matrix $A$ naturally corresponds to a set system $\mathcal{F}_A$
 taking the rows as characteristic vectors of subsets of $[n]$.
$\mathcal{F}_A$ shatters an $s$-set iff $A$ has the $2^s\times s$ configuration of all distinct rows
   of size $s$. \par
The concept of forbidden configurations can be extended for matrices of entries from $(k)$.
A $(k)$-matrix is \emph{simple} if there are no repeated rows. Given a $(k)$-matrix $F$,
 we say a matrix $A$ has $F$ as a \emph{configuration} denoted $F\in A$, if there is a
 submatrix of $A$ which is a row and column permutation of $F$.
Theorem~\ref{thm:multishatter} gives upper bound on $m$ for an $m\times n$ simple $(k)$-matrix that
 does not have the $k^s\times s$ configuration of all distinct rows of size $s$.\par
Definition (\ref{eq:forbdef}) of $\forb(n,F)$ can be applied to $(k)$-matrices, as well.
However, if polynomial upper bounds are desired, then more than one configurations must be
 forbidden simultaneously.
Let $\mathcal{F}=\{F_1,F_2,\ldots ,F_t\}$ be a collection of (not necessarily simple) $(k)$-matrices.
Let
  $$\forb(n,k,\mathcal{F})=\max\{m\colon A \hbox{ is }m\times
     n\hbox{ simple }(k)\hbox{-matrix and has no configuration }F\in\mathcal{F}\}.$$
In~\cite{fkms} it was proved that
$$
  \forb(n,k,\mathcal{F})=\Theta\left(n^{\binom{k}{2}}\right)  \enskip \hbox{ for }\enskip
 \mathcal{F}= \left\{\left(\begin{array}{cc}a&b\\b&a\end{array}\right)\colon 0\le a<b\le k-1\right\}.
   $$
Theorem~\ref{thm:ij-shatter} can also be reformulated in this language.
Let $F$ be a  (0,1)-matrix, then $F(i,j)$ denotes the $(i,j)$-matrix obtained
 from $F$ replacing 0's by $i$'s and 1's by $j$'s.
Let $K_s$ denote the  $2^s\times s$ (0,1)-matrix of all distinct rows of size $s$.
Theorem~\ref{thm:ij-shatter} gives  bounds for $\forb(n,k,\mathcal{F})$ where
   $\mathcal{F}= \left\{K_{s_{i,j}}(i,j)\colon  0\le i<j\le k-1\right\}$.
Here we prove a lower bound.
\begin{prop}\label{prop:lowerbound}
Let $F^{i,j}\colon 0\le i<j\le k-1$ be simple $(0,1)$-matrices such that
 none of them contains a constant column.
Then
\begin{equation}\label{eq:productlowerbd}
\forb(n,k,\{F^{i,j}(i,j)\colon 0\le i<j\le k-1\})
    \ge\prod_{ 0\le i<j\le k-1}\forb\left(\frac{n}{\binom{k}{2}},F^{i,j}\right).
\end{equation}
\end{prop}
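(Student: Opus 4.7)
The plan is a product (block-diagonal) construction. Partition $[n]$ into $\binom{k}{2}$ disjoint blocks $B_{i,j}$, one for each pair $0\le i<j\le k-1$, each of size $\lfloor n/\binom{k}{2}\rfloor$. For each pair, let $A^{i,j}$ be an extremal simple $(0,1)$-matrix on $|B_{i,j}|$ columns that avoids $F^{i,j}$, so that $A^{i,j}$ has $\forb(n/\binom{k}{2},F^{i,j})$ rows, and let $A^{i,j}(i,j)$ denote the relabelling used in the statement. Build the big $(k)$-matrix $A$ whose rows are all $\prod_{i<j}\forb(n/\binom{k}{2},F^{i,j})$ concatenations obtained by choosing one row from each $A^{i,j}(i,j)$ and placing it in block $B_{i,j}$. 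The row count matches the right-hand side of (\ref{eq:productlowerbd}).

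Simplicity is automatic: two distinct tuples of row-choices differ in at least one block, and since each $A^{i,j}(i,j)$ inherits simplicity from $A^{i,j}$, the corresponding rows of $A$ differ in that block.

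The main step is to verify that $A$ contains none of the configurations $F^{a,b}(a,b)$. Suppose, for contradiction, that some row set $R$ and column set $C$ of $A$ realise $F^{a,b}(a,b)$. Every entry of this configuration lies in $\{a,b\}$, while any column drawn from block $B_{i,j}$ takes entries only in $\{i,j\}$. If $\{i,j\}\ne\{a,b\}$, then $|\{i,j\}\cap\{a,b\}|\le 1$, so such a column restricted to $R$ would have to be constant — contradicting the hypothesis that $F^{a,b}$, and hence $F^{a,b}(a,b)$, has no constant column. Therefore $C\subseteq B_{a,b}$. But then the restrictions of the rows of $R$ to $B_{a,b}$ are rows of $A^{a,b}(a,b)$ that already witness $F^{a,b}(a,b)\in A^{a,b}(a,b)$; relabelling $a\mapsto 0,\ b\mapsto 1$ yields $F^{a,b}\in A^{a,b}$, contradicting the choice of $A^{a,b}$.

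The one genuinely delicate point — and where I expect the whole argument to hinge — is this case analysis forcing $C$ into a single block. It is exactly here that the hypothesis \emph{no constant column in any $F^{i,j}$} is used: without it, a column whose alphabet merely meets $\{a,b\}$ could masquerade inside the configuration, and the direct product of the extremal $A^{i,j}$'s would no longer be guaranteed to avoid the forbidden configurations. The floor in $n/\binom{k}{2}$ only affects constants, so it can be absorbed in the $\Omega$-notation used in (\ref{eq:lowerbound}).
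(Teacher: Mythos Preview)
Your proof is correct and follows essentially the same product construction as the paper: partition the columns into $\binom{k}{2}$ blocks, place an extremal $A^{i,j}$ in each, and use the no-constant-column hypothesis to force any copy of $F^{a,b}(a,b)$ entirely into block $B_{a,b}$. The only point you leave implicit (and which the paper makes explicit) is that the simplicity of $F^{a,b}$ is what guarantees the rows of $R$ project to \emph{distinct} rows of $A^{a,b}(a,b)$, so that the configuration genuinely sits inside $A^{a,b}$; otherwise the argument is the same.
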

\begin{proof}
We apply the  product construction introduced in \cite{AGS}.
Let $A^{i,j}$ be a simple $(0,1)$-matrix with $\frac{n}{\binom{k}{2}}$ columns and
 $\forb\left(\frac{n}{\binom{k}{2}},F^{i,j}\right)$ rows without configuration $F^{i,j}$.
Let
\begin{equation*}
A=A^{0,1}\times A^{0,2}\times\ldots\times A^{k-2,k-1}
\end{equation*}
be the matrix with $n$ columns and $ |A^{0,1}|\cdot |A^{0,2}|\cdot\ldots\cdot | A^{k-2,k-1}|$
 rows obtained by choosing one row from each of the matrices and putting them side by side in
  every possible way.
We claim that this product matrix $A$ avoids all configurations  $F^{i,j}\colon 0\le i<j\le k-1$.
Indeed, since each column of $F^{i,j}$ contains both symbols $i$ and $j$,
 columns of a configuration $F^{i,j}$ should come from columns of $A^{i,j}$ in the product.
Suppose $F^{i,j}$ has $p$ columns. Since  $F^{i,j}$ is simple and $A^{i,j}$ does not have
 configuration $F^{i,j}$, for each $p$-tuple of columns of $A^{i,j}$   there must be a row
 of  $F^{i,j}$ that is missing on those columns.
This will be missing in the product matrix, as well.
\end{proof}
Lower bound (\ref{eq:lowerbound}) follows by taking  $F^{i,j}=K_{s_{i,j}}\colon 0\le i<j\le k-1$
 and applying Theorem~\ref{thm:sauer}.

\section{Open problems}
There are more questions than answers known in  connection with $(i,j)$-shattering.
The principal problem is that Theorem~\ref{thm:ij-shatter} does not give sharp bounds,
 in contrast with Theorem~\ref{thm:sauer} and Theorem~\ref{thm:multishatter}.
We can give an exact bound only if most of the $s_{i,j}$'s are  ones.
\begin{prop}\label{prop:egyesek} Assume that $s_{i,j}=1$ if $i<j<k-1$. Then
\begin{equation*}
\forb(n,k,\vec{s})=\max_{\sum_{i=0}^{k-2}n_i=n}\prod_{i=0}^{k-2}
  \left(\binom{n_i}{s_{i,k-1}-1}+\binom{n_i}{s_{i,k-1}-2}+\ldots +\binom{n_i}{0}\right)
\end{equation*}
\end{prop}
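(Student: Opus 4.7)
My plan for proving Proposition~\ref{prop:egyesek} is to exploit the rigidity imposed by $s_{i,j}=1$ for every pair $0\le i<j<k-1$. The condition $s_{i,j}=1$ says that no single coordinate can be $(i,j)$-shattered, so at each coordinate $t\in [n]$ the set of values from $\{0,1,\ldots,k-2\}$ actually used by codewords of $\mathcal{C}$ has size at most one. Together with the unrestricted value $k-1$, this yields a partition $[n]=N_0\cup N_1\cup\cdots\cup N_{k-2}$ in which $N_i$ collects the coordinates where the only value from $\{0,1,\ldots,k-2\}$ appearing is $i$ (coordinates on which every codeword takes the value $k-1$ are assigned arbitrarily). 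Write $n_i:=|N_i|$.

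For the upper bound, I would first show that any $(i,k-1)$-shattered set $S\subseteq [n]$ must satisfy $S\subseteq N_i$: if some $t\in S$ belonged to $N_{i'}$ with $i'\ne i$, no codeword could carry value $i$ at $t$, so $\mathcal{C}\vert_S$ could not realize the whole of $\{i,k-1\}^S$. Consequently the projection $\mathcal{C}\vert_{N_i}$ is a two-valued code over $\{i,k-1\}$ without any shattered set of size $s_{i,k-1}$, and Theorem~\ref{thm:sauer} bounds its size by $\sum_{r=0}^{s_{i,k-1}-1}\binom{n_i}{r}$. Since a codeword of $\mathcal{C}$ is uniquely determined by its tuple of restrictions to $N_0,\ldots,N_{k-2}$, I would then conclude
\begin{equation*}
|\mathcal{C}|\;\le\;\prod_{i=0}^{k-2}|\mathcal{C}\vert_{N_i}|\;\le\;\prod_{i=0}^{k-2}\sum_{r=0}^{s_{i,k-1}-1}\binom{n_i}{r},
\end{equation*}
and taking the maximum over all $n_0+\cdots+n_{k-2}=n$ gives the upper bound claimed.

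For the matching lower bound, I would apply the product construction of Proposition~\ref{prop:lowerbound}. Choose a partition $n=n_0+\cdots+n_{k-2}$ attaining the maximum, and on each block of size $n_i$ take an extremal two-valued code on alphabet $\{i,k-1\}$ achieving Sauer's sharp bound $\sum_{r=0}^{s_{i,k-1}-1}\binom{n_i}{r}$ with no shattered set of size $s_{i,k-1}$. Concatenating one row from each block in every possible way produces a code of the required size. By construction no coordinate ever carries two distinct values from $\{0,\ldots,k-2\}$, so no $(i,j)$-shattering with $i<j<k-1$ is possible; and by the block argument above, any $(i,k-1)$-shattered set would have to lie inside $N_i$, which is impossible since the block code was chosen to avoid shattered sets of size $s_{i,k-1}$.

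I expect the only genuinely delicate point to be the bookkeeping around coordinates at which every codeword carries the value $k-1$: such coordinates can be placed in any block, and one must coordinate the assignment in the upper and lower bound arguments so that the same optimizing partition realizes both. Otherwise the proof reduces cleanly to Sauer's lemma on each of the $k-1$ blocks together with the product construction already developed for Proposition~\ref{prop:lowerbound}, so no new ingredients should be needed.
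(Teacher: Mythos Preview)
Your proposal is correct and follows essentially the same route as the paper: partition the coordinates into blocks $N_0,\ldots,N_{k-2}$ according to which symbol from $\{0,\ldots,k-2\}$ can appear (the condition $s_{i,j}=1$ for $i<j<k-1$ forces this), apply Theorem~\ref{thm:sauer} on each block to bound $|\mathcal{C}\vert_{N_i}|$, multiply, and match the bound with the product construction. Your added remarks on placing the all-$(k-1)$ coordinates and on why an $(i,k-1)$-shattered set must lie inside $N_i$ are harmless elaborations of points the paper leaves implicit.
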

\begin{proof} Suppose that $A$ is a $(k)$-matrix without configurations $K_{s_{i,j}}$.
$s_{i,j}=1$ means that symbols $i$ and $j$ cannot occur in the same column of $A$.
Thus columns of $A$ can be partitioned into $k-1$ parts, part $C_i$ containing only
 symbols $i$ and $k-1$ for $0\le i<k-1$.
The number of different projections onto column set  $C_i$ is
 $\binom{n_i}{s_{i,k-1}-1}+\binom{n_i}{s_{i,k-1}-2}+\ldots +\binom{n_i}{0}$ for $n_i=|C_i|$
 by Theorem~\ref{thm:sauer}.
Thus the maximum number of different rows of $A$ is at most
 $\prod_{i=0}^{k-2}\left(\binom{n_i}{s_{i,k-1}-1}+\binom{n_i}{s_{i,k-1}-2}+\ldots +\binom{n_i}{0}\right)$.
On the other hand the product construction (\ref{eq:productlowerbd}) provides a matching lower bound.
\end{proof}
It would be interesting to find exact bounds for other special cases, as well. \par
Another question whether containing no constant column or simplicity of the forbidden configurations
 is necessary condition in Proposition~\ref{prop:lowerbound}.
Also,  Proposition~\ref{prop:lowerbound} and Theorem~\ref{thm:ij-shatter} give asymptotically
 tight bounds if $\forb(n,F^{i,j})=\Theta(n^{s_{i,j}-1})$ where $s_{i,j}$ is the number of
 columns of $F^{i,j}$.
The question is that does Proposition~\ref{prop:lowerbound} give the correct order of magnitude
 of $\forb(n,k,\mathcal{F})$ for other lists $\mathcal{F}$  of forbidden configurations?

Since ${\rm VC}$-dimension has many of  applications
 in statistics, computer science and combinatorics, it seems likely that 
 bi-dimension can be applied there, too.

\newpage

\end{document}